\newtheorem{theorem}{Theorem}
\newtheorem{lemma}{Lemma}
\newtheorem{corollary}{Corollary}
\newcommand{\RR}{{\mathbb R}}
\newcommand{\CC}{{\mathbb C}}
\newcommand{\E}{{\mathbb E}}
\newcommand{\Var}{\mathsf{Var}}
\newcommand{\Cov}{\mathsf{Cov}}
\def\b0{{\bf 0}}
\def\b1{{\bf 1}}
\title{Concentration of Lipschitz Functions \\ of Negatively Dependent Variables}
\author{Kevin Garbe \and Jan Vondr\'ak}
\date{\today}
\begin{document}

\maketitle

\abstract{We explore the question whether Lipschitz functions of random variables under various forms of negative correlation satisfy concentration bounds similar to McDiarmid's inequality for independent random variables. We prove such a concentration bound for random variables satisfying the condition of {\em negative regression}, correcting an earlier proof \cite{DubhashiRanjan98}.}

\section{Introduction}

We study the question whether functions of negatively dependent random variables satisfy concentration bounds, similar to functions of independent random variables.
Many tools are known for functions of independent random variables --- e.g., martingales, Talagrand's inequality, the Kim-Vu inequality, and the entropy method for self-bounding functions. It is also known that Chernoff-Hoeffding bounds for (linear functions of) independent random variables generalize to negatively dependent variables, assuming a relatively weak property known as negative cylinder dependence. A natural question therefore arises, whether more sophisticated concentration bounds also generalize to some form of negative dependence.
In this paper, we discuss the question of concentration bounds for {Lipschitz functions}\footnote{A function $f:\{0,1\}^n \rightarrow \RR$ is $c$-Lipschitz, if each variable can affect the value additively by at most $c$.}  on $\{0,1\}^n$, under certain forms of negative dependence.

\subsection{Prior work}

Negatively dependent random variables arise naturally in scenarios motivated by statistical physics as well as computer science.
Newman studied the asymptotic behavior of ensembles of random variables under a certain notion of negative dependence, and proved a central limit theorem in this setting \cite{Newman84}.
Panconesi and Srinivasan \cite{PancoSrini97} proved that random variables under the condition of {\em negative cylinder dependence} (see Section~\ref{sec:neg-dep} for definitions) satisfy Chernoff-Hoeffding concentration bounds, just like independent random variables. It was shown that negative cylinder dependence is exhibited by random variables arising in various randomized rounding scenarios which led to several applications \cite{PancoSrini97,GKPS06,AgeevS04,CVZ10}. We note that from the point of view of this paper, these concentration results are somewhat special as they only apply to {\em linear} functions of negatively dependent variables.

The notion of negative dependence was studied systematically by Pemantle \cite{Pemantle00} who proposed several alternative notions of negative dependence and investigated their relative merits and relationships. In particular, he posed the question whether there is a robust notion of negative dependence, closed under natural operations, and allowing one to replicate some of the theory enjoyed by independent random variables or positively dependent variables (such as the FKG inequality).

Meanwhile, Dubhashi and Ranjan \cite{DubhashiRanjan98} studied in depth the scenario of {\em balls and bins}, and proved that the respective random variables satisfy several notions of negative dependence. Among other results, they stated a concentration bound for any Lipschitz function of random variables satisfying the property of {\em negative regression}. This was a rare example of a concentration inequality for non-linear functions of negatively dependent variables that can be found in the literature --- unfortunately, it turns out that their proof was erroneous (see Appendix~\ref{app:error} for details).
Later, Farcomeni  \cite{Farcomeni08} studied concentration bounds under a weaker notion of negative dependence (which reduces to negative cylinder dependence in the case of $\{0,1\}$ random variables) and claimed a concentration inequality for Lipschitz functions of such variables. Unfortunately, his proof again turned out to be incorrect, as reported by \cite{PemPeres14}. In that work, Pemantle and Peres \cite{PemPeres14} proved a concentration bound for Lipschitz functions under {\em strong Rayleigh measures}, a strong notion of negative dependence which implies all the other notions discussed in this paper. Their proof relies on the theory of stable polynomials \cite{BBL09}.

Let us mention that special-purpose tail inequalities have been proved for submodular functions and matrix norms under a particular randomized rounding scheme on matroid polytopes \cite{CVZ09,HarveyOlver14}. While this is a natural setting where negatively dependent variables arise, it is not known how to generalize these concentration bounds to any general notion of negative dependence. 

\paragraph{Organization.}
In Section~\ref{sec:neg-dep}, we survey several notions of negative dependence and their relationships.
In Section~\ref{sec:neg-reg-result}, we provide a corrected proof of the concentration bound for Lipschitz functions of random variables under the assumption of negative regression. This is the main contribution of this paper.
In Appendix~\ref{app:error}, we explain where the proof of \cite{DubhashiRanjan98} fails and discuss a counterexample that motivated our proof.

\section{Notions of negative dependence}
\label{sec:neg-dep}

Let us survey here several notions of negative dependence and their known relationships. 
In this paper, we restrict attention to (correlated) Bernoulli random variables, i.e.~probability measures on $\{0,1\}^n$.

\paragraph{Pairwise Negative Correlation.}
This is the weakest notion of negative dependence considered here. 
Variables $X_1,\ldots,X_n$ are pairwise negatively correlated, if $$\Cov[X_i,X_j] = \E[X_i X_j] - \E[X_i] \E[X_j] \leq 0$$ for all $i,j \in [n]$.
Pairwise negative correlation allows one to use Chebyshev's inequality, but it is too weak to imply exponential tail bounds.
For example, $n$ pairwise independent variables defined by the $n \times n$ Hadamard matrix have probability $\Omega(1/n)$ of being all equal to $1$.

\paragraph{Negative Cylinder Dependence.}
$X_1,\ldots,X_n$ are negative cylinder dependent, if for every $S \subseteq [n]$,
$$ \E[ \prod_{i \in S} X_i] \leq \prod_{i \in S} \E[X_i] $$
and
$$ \E[ \prod_{i \in S} (1-X_i)] \leq \prod_{i \in S} \E[1-X_i].$$
Negative cylinder dependence is strictly stronger than pairwise negative correlation. It is known to imply exponential concentration bounds for linear functions of $X_1,\ldots,X_n$.

\paragraph{Negative Association.}
$X_1,\ldots,X_n$ are negatively associated if for any $I,J \subset [n], I \cap J = \emptyset$ and any pair of non-decreasing functions $f:\{0,1\}^I \rightarrow \RR$, $g:\{0,1\}^J \rightarrow \RR$, $$\E[f(X_I) g(X_J)] \leq \E[f(X_I)] \E[g(X_J)].$$
(Here and in the following, $X_S \in \{0,1\}^S$ denotes the $|S|$-tuple of random variables indexed by $S$.)

Negative association is strictly stronger than negative cylinder dependence. Whether negative association implies exponential concentration bounds for Lipschitz functions is an interesting question (posed by Elchanan Mossel \cite{PemPeres14} and still open as far as we know). 

\paragraph{Negative Regression.}
$X_1,\ldots,X_n$ satisfy negative regression, if for any $I,J \subset [n], I \cap J = \emptyset$, any non-decreasing function $f:\{0,1\}^I \rightarrow \RR$ and $a \leq b \in \{0,1\}^J$,
$$ \E[f(X_I) \mid X_J = a] \geq \E[f(X_I) \mid X_J = b].$$
Negative regression is the main subject of this paper; we prove a concentration bound for Lipschitz functions under negative regression here.
The relationship of negative association and negative regression is not completely understood. It is known that negative association does not imply negative regression \cite{DubhashiRanjan98} but the status of the opposite implication is unknown. Negative regression is preserved under conditioning of variables, while negative association is not. It is easy to see that both properties are implied by the following.

\paragraph{Conditional Negative Association.}
$X_1,\ldots,X_n$ are conditionally negatively associated, if for any $I \subset [n]$ and $a \in \{0,1\}^I$, $(X_j: j \in [n] \setminus I)$ conditioned on $X_I = a$ are negatively associated.

\

Pemantle \cite{Pemantle00} conjectured that conditional negative association is equivalent to negative regression. 
He also defined several related notions that entail the ``negative lattice condition" (log-submodularity) and ``external fields". We will not discuss these here.

\paragraph{Stochastic Covering.}
$X_1,\ldots,X_n$ satisfy the stochastic covering property, if for any $I \subset [n]$ and $a \geq a' \in \{0,1\}^I$ such that $\| a' - a \|_1 = 1$, there is a coupling $\nu$
of the distributions $\mu, \mu'$ of $(X_j: j \in [n] \setminus I)$ conditioned on $X_I = a$ or $X_I=a'$, respectively, such that $\nu(x,y)=0$ unless $x \leq y$ and $\|x-y\|_1 \leq 1$.

\

This property is discussed in \cite{PemPeres14}. It is stronger than negative regression (which is equivalent to a similar coupling condition, without the requirement that the coupling is supported on pairs of distance at most $1$; this follows from Strassen's theorem as we discuss further).

\paragraph{The Strong Rayleigh Property.}
$X_1,\ldots,X_n$ satisfy the strong Rayleigh property if the generating function $F(z_1,\ldots,z_n) = \E[\prod_{j=1}^{n} z_j^{X_j}]$ is a real stable polynomial, which means it has no root $(z_1,\ldots,z_n) \in \CC^n$ with all imaginary components strictly positive.

\medskip

The strong Rayleigh property is intimately tied to the theory of stable polynomials, and it was shown in \cite{BBL09} to imply all the other forms of negative dependence discussed above. Hence, it can be viewed as a very strong and robust notion of negative dependence --- it is closed under several natural operations, and exhibits a number of other desirable properties. 
Particularly relevant to this paper is the result of \cite{PemPeres14} that any Lipschitz function of random variables under a strong Rayleigh measure satisfies concentration bounds similar to independent random variables. 
Furthermore, \cite{PemPeres14} also proves a concentration bound for Lipschitz functions for homogeneous random variables ($\sum_{i=1}^{n} X_i = k$ for some constant $k$, with probability $1$) satisfying the stochastic covering property.

\section{Concentration under negative regression}
\label{sec:neg-reg-result}

We provide here what (we claim) is a correct proof of a previously claimed result \cite[Proposition 31]{DubhashiRanjan98}, namely a concentration bound for Lipschitz functions of random variables under the property of negative regression. In fact we improve their theorem in the sense that we do not require the assumption of monotonicity of $f$.
For simplicity, we focus on the case of $1$-Lipschitz functions.

\begin{theorem}
\label{thm:Lipschitz-R}
Let $X_1,\ldots,X_n$ be $\{0,1\}$ random variables satisfying the condition of negative regression.
Let $f:\{0,1\}^n \rightarrow \RR$ be a $1$-Lipschitz function, and $\mu = \E[f(X_1,\ldots,X_n)]$. Then for any $t > 0$,
$$ \Pr[f(X_1,\ldots,X_n) \geq \mu + t] \leq e^{-t^2/2n},$$
$$ \Pr[f(X_1,\ldots,X_n) \leq \mu - t] \leq e^{-t^2/2n}.$$
If $f$ is monotone, then the bound can be improved to $e^{-2 t^2 / n}$.
\end{theorem}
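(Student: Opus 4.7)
The plan is to use the Doob martingale $M_i = \E[f(X) \mid X_1, \ldots, X_i]$ (with $M_0 = \mu$ and $M_n = f(X)$) and apply Azuma--Hoeffding in its bounded-range form: if $M_i - M_{i-1}$ lies a.s.\ in an $\cF_{i-1}$-measurable interval of length $c_i$, then $\Pr[M_n - M_0 \geq t] \leq \exp(-2 t^2/\sum_i c_i^2)$. The claimed bounds $e^{-t^2/(2n)}$ and $e^{-2t^2/n}$ correspond respectively to $c_i \equiv 2$ and $c_i \equiv 1$.

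The conditional range of $M_i - M_{i-1}$ given $\cF_{i-1}$ is $|g_i(X_{<i}, 0) - g_i(X_{<i}, 1)|$, where $g_i(x_{\leq i}) = \E[f(X) \mid X_{\leq i} = x_{\leq i}]$. To bound this, fix $x_{<i}$ and let $\mu_0, \mu_1$ be the conditional laws of $X_{>i}$ given $X_{\leq i} = (x_{<i}, 0)$ and $(x_{<i}, 1)$. Both inherit negative regression under conditioning, and applying negative regression to $X_i$ versus the block $X_{>i}$ yields that $\mu_0$ stochastically dominates $\mu_1$; Strassen's theorem then produces a monotone coupling $(Y^0, Y^1)$ with $Y^0 \geq Y^1$ a.s. Telescoping
$$g_i(x_{<i},0) - g_i(x_{<i},1) = \E\bigl[f(x_{<i},0,Y^0) - f(x_{<i},1,Y^1)\bigr]$$
along the path $(0, Y^0) \to (0, Y^1) \to (1, Y^1)$ and using the $1$-Lipschitz property of $f$ bounds the right-hand side in absolute value by $1 + \E\|Y^0 - Y^1\|_1$, where $\E\|Y^0 - Y^1\|_1 = \sum_{j > i}\bigl(\Pr[X_j = 1 \mid X_i = 0, X_{<i}] - \Pr[X_j = 1 \mid X_i = 1, X_{<i}]\bigr)$ because $Y^0 \geq Y^1$. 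If additionally $f$ is monotone, the same telescope gives one-sided bounds (the $X_i$-flip term lies in $[-1, 0]$ and the $Y$-flip term in $[0, \E\|Y^0 - Y^1\|_1]$), sharpening the overall bound to $\max\{1, \E\|Y^0 - Y^1\|_1\}$.

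The main obstacle, and the point where the Dubhashi--Ranjan proof fails, is the technical lemma that under negative regression $\E\|Y^0 - Y^1\|_1 \leq 1$ almost surely. Equivalently, conditionally on $X_{<i}$ the variable $X_i$ is non-negatively correlated with the total $\sum_j X_j$, so the indirect effect of flipping $X_i$ on the other coordinates cannot outweigh the one unit of ``mass'' that $X_i$ itself releases. This is precisely the quantitative statement tacitly assumed by Dubhashi--Ranjan; the counterexample described in Appendix~\ref{app:error} illustrates why the naive Lipschitz-only bound can fail. I would prove the lemma by exploiting that negative regression is preserved under arbitrary conditionings, so that the dominance relation between $\mu_0$ and $\mu_1$ and the induced monotone coupling can be analyzed recursively rather than invoked just once. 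Granted the lemma, $c_i = 2$ in the general case and $c_i = 1$ in the monotone case, and Azuma--Hoeffding yields both stated upper-tail bounds; the lower-tail estimates follow by applying the same argument to $-f$.
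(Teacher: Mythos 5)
Your analysis of the martingale increment via Strassen coupling and the telescoping decomposition is essentially the paper's computation, and your identification of the crucial quantity $\E\|Y^0 - Y^1\|_1$ is exactly right. But the lemma you propose to fill the gap is \emph{false}, and the counterexample in Appendix~\ref{app:error} is precisely a refutation of it, not merely an illustration of a danger. Take $X_2,\ldots,X_n$ i.i.d.\ uniform on $\{0,1\}$ and $X_1 = 1 - \prod_{i\geq 2} X_i$. These satisfy negative regression, yet with the fixed ordering and $i=1$ (so $X_{<i}$ is empty) one has $\Pr[X_j = 1 \mid X_1 = 0] = 1$ for all $j \geq 2$ while $\Pr[X_j = 1 \mid X_1 = 1] \approx 1/2$, so your quantity $\E\|Y^0 - Y^1\|_1 \approx (n-1)/2$. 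No amount of ``analyzing the monotone coupling recursively'' will push this below $1$; the variable $X_1$ simply has a huge indirect influence. You diagnosed the Dubhashi--Ranjan failure correctly but then proposed to re-prove the very statement the counterexample demolishes.

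The missing idea, which is the main contribution of the paper, is to abandon the fixed ordering and reveal the variables \emph{adaptively}. The crucial Lemma~\ref{lem:pick} shows that at each step, given the values revealed so far, \emph{some} unrevealed variable $X_i$ satisfies the bound
$$\sum_{\ell} \bigl(\E[X_\ell \mid X_K = a_K, X_i = 0] - \E[X_\ell \mid X_K = a_K, X_i = 1]\bigr) \leq 1.$$
This is a pigeonhole consequence of nonnegativity of the conditional variance of $\sum_{i} X_i$: expand $\Var[\sum_{i\in L} X_i \mid X_K = a_K]$ into the sum over $i$ of $\Var[X_i] + \sum_{j\neq i} \Cov[X_i,X_j]$, note that some term must be nonnegative, and rewrite that term as $\pi_i(1-\pi_i)\bigl(1 - \sum_\ell (\E[X_\ell\mid X_i{=}0]-\E[X_\ell\mid X_i{=}1])\bigr)$. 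The ordering is thus a data-dependent permutation $\pi(1),\pi(2),\ldots$; the martingale is built along this random order, and your Strassen/telescope argument then goes through unchanged to give bounded differences of length $2$ (resp.\ $1$ for monotone $f$). Without this adaptive choice, the claimed concentration is not attainable by the Doob-martingale route.
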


We remark that in the case of monotone $f$, our bound coincides with that of McDiarmid's inequality for independent random variables \cite{McDiarmid89},
in which case the constant in the exponent is known to be tight. In the non-monotone case, it is known that the same constant cannot be achieved \cite{PemPeres14}, but it might be possible to prove a bound of $e^{-t^2/n}$; i.e., our constant could be off by a factor of $2$. 

Negative regression is weaker than stochastic covering, or the strong Rayleigh property, and hence qualitatively our bound subsumes that of \cite{PemPeres14}. 
In terms of applications, it is fair to say that most of the known examples satisfying negative regression in fact satisfy the strong Rayleigh property and are hence covered by \cite{PemPeres14}.
Quantitatively speaking, the constants in the exponent proved by Pemantle and Peres \cite{PemPeres14} are somewhat worse than ours, but their bounds are functions of $\mu = \E[\sum X_i]$ rather than $n$, so are not directly comparable to ours.

In terms of techniques, our proof is more elementary than that of \cite{PemPeres14}, which relies on the (beautiful) theory of stable polynomials. 
We follow the classical martingale paradigm, with one new twist --- an adaptive ordering of the variables.

\subsection{Adaptive martingale analysis}

As we show in Appendix~\ref{app:error}, it is not possible to replicate the martingale analysis of McDiarmid's inequality under negative regression, if we work with a fixed ordering of variables $X_1,\ldots,X_n$. The problem is that a particular variable $X_1$ might have a large influence on the distribution of the remaining variables, and hence affect significantly the conditional expectation of $f$ once the value of $X_1$ is revealed. 

Our way around this issue is that we can choose variables {\em adaptively}, in order to construct a martingale with bounded differences. Our goal is to choose a variable that does not affect the remaining variables too heavily. Due to negative regression, we know that conditioning on $X_i=1$ can only affect the remaining variables negatively. Hence the only thing we have to worry about is that this negative effect is too large. However, the following lemma shows that there always exists a variable whose negative influence on the remaining variables is not too large.

\begin{lemma}
\label{lem:pick}
Let $X_1, \ldots, X_n$ be $\{0,1\}$ random variables.
Fix $K \subset [n]$ and $a_K \in \{0,1\}^K$ such that $Pr[X_K=a_K] > 0$. Then there exists $i \in [n] \setminus K$ such that  
either $X_i$ conditioned on $X_K=a_K$ is deterministic, or
$$ \sum_{\ell \in [n] \setminus (K \cup \{i\})} \left( \E[ X_\ell \mid X_K = a_K, X_i = 0] - \E[ X_\ell \mid X_K = a_K, X_i = 1] \right) \leq 1.$$
\end{lemma}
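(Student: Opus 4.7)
The plan is to exhibit the desired index $i$ by an averaging argument, weighting the candidates $i \in [n] \setminus K$ by $p_i(1-p_i)$, where $p_i := \Pr[X_i = 1 \mid X_K = a_K]$. Throughout the argument I will suppress the conditioning $X_K = a_K$ notationally, and write $D_i$ for the sum
\[
D_i \;=\; \sum_{\ell \in [n] \setminus (K \cup \{i\})} \bigl(\E[X_\ell \mid X_i = 0] - \E[X_\ell \mid X_i = 1]\bigr)
\]
appearing in the lemma statement.

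The workhorse is the elementary identity (valid for any Bernoulli $X_i$ and any $X_\ell$, via the law of total expectation)
\[
\Cov(X_i, X_\ell) \;=\; p_i(1-p_i)\bigl(\E[X_\ell \mid X_i = 1] - \E[X_\ell \mid X_i = 0]\bigr),
\]
so that $p_i(1-p_i)\, D_i = -\sum_{\ell \in [n] \setminus (K \cup \{i\})} \Cov(X_i, X_\ell)$. Summing over $i \in [n] \setminus K$ and applying the standard variance expansion
\[
\Var\!\Bigl(\sum_{i \notin K} X_i\Bigr) \;=\; \sum_{i \notin K} \Var(X_i) \;+\; \sum_{\substack{i,\ell \notin K \\ i \neq \ell}} \Cov(X_i, X_\ell)
\]
together with $\Var(X_i) = p_i(1-p_i)$, I get
\[
\sum_{i \notin K} p_i(1-p_i)\, D_i \;=\; \sum_{i \notin K} p_i(1-p_i) \;-\; \Var\!\Bigl(\sum_{i \notin K} X_i\Bigr) \;\leq\; \sum_{i \notin K} p_i(1-p_i),
\]
the last inequality because variances are nonnegative.

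The conclusion then comes by cases on each $i \in [n] \setminus K$: if $p_i \in \{0,1\}$ then $X_i$ is deterministic under the conditioning and the lemma holds for that $i$; otherwise $p_i(1-p_i) > 0$ for every $i$, and if one had $D_i > 1$ for all of them, the weighted sum would strictly exceed $\sum_{i \notin K} p_i(1-p_i)$, contradicting the displayed inequality. Hence some $D_i \leq 1$.

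I expect no real obstacle beyond spotting the correct weighting: once the weight $p_i(1-p_i)$ is introduced, the off-diagonal covariances collapse into a single variance of a sum, which is the engine of the argument. Note that the proof does not actually invoke the negative-regression hypothesis; that hypothesis will play its role elsewhere, ensuring $D_i \geq 0$ and thereby allowing the chosen $X_i$ to be used as the next coordinate in the adaptive martingale with controlled differences.
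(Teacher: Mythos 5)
Your proof is correct and is essentially the paper's argument: both rest on the nonnegativity of $\Var[\sum_{i \notin K} X_i \mid X_K = a_K]$ together with the identity $\Cov(X_i,X_\ell) = p_i(1-p_i)\bigl(\E[X_\ell \mid X_i=1]-\E[X_\ell \mid X_i=0]\bigr)$, the only cosmetic difference being that you phrase the selection of $i$ as a weighted averaging over all candidates while the paper directly extracts one nonnegative term from the variance expansion. (Minor presentational point: dispose of the deterministic case first, so that $D_i$ is well-defined for every remaining $i$ before you sum.)
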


\begin{proof}
Let us denote $L = [n] \setminus K$. We consider the following quantity:
\begin{align*}
\Var\big[\sum_{i \in L} X_i \mid X_K=a_K\big] 
 = \sum_{i \in L} \left( \Var[X_i \mid X_K=a_K] + \sum_{j \in L \setminus \{i\}} \Cov[X_i, X_j \mid X_K=a_K] \right).
\end{align*}
Variance is always nonnegative, so at least one term of the summation over $i \in L$ must be non-negative:
\begin{equation}
\label{eq:1}
\Var[X_i \mid X_K=a_K] + \sum_{j \in L \setminus \{i\}} \Cov[X_i, X_j \mid X_K=a_K] \geq 0.
\end{equation}
Let us denote $\pi_i = \Pr[X_i=1 \mid X_K=a_K]$. We can write
$$ \Var[X_i \mid X_K=a_K] = \E[X_i^2 \mid X_K=a_K] - (\E[X_i \mid X_K=a_k])^2 = \pi_i (1-\pi_i) $$
and
\begin{eqnarray*}
\Cov[X_i,X_j \mid X_K=a_K] & = & \E[X_i X_j \mid X_K=a_K] - \E[X_i \mid X_K=a_K] \, \E[X_j \mid X_K=a_K] \\
 & = & \pi_i ( \E[X_j \mid X_K=a_K, X_i=1] - \E[X_j \mid X_K=a_K] ).
\end{eqnarray*}
Since $\E[X_j \mid X_K=a_K] = \pi_i  \E[X_j \mid X_K=a_K, X_i=1] + (1-\pi_i)  \E[X_j \mid X_K=a_K, X_i=0]$, we can rewrite this as
\begin{eqnarray*}
\Cov[X_i,X_j \mid X_K=a_K] = \pi_i (1-\pi_i) ( \E[X_j \mid X_K=a_K, X_i=1] -  \E[X_j \mid X_K=a_K, X_i=0]).
\end{eqnarray*}
If $\pi_i=0$ or $\pi_i=1$, then $X_i$ conditioned on $X_K=a_K$ is deterministic, and we are done. Otherwise, substitute the expressions for $\Var[X_i \mid X_K=a_K]$ and $\Cov[X_i,X_j \mid X_K=a_K]$ in (\ref{eq:1}), divide by $\pi_i (1-\pi_i)$, and conclude that
$$ \sum_{\ell \in L \setminus \{i\}} \left( \E[ X_\ell \mid X_K = a_K, X_i = 0] - \E[ X_\ell \mid X_K = a_K, X_i = 1] \right) \leq 1.$$ 
\end{proof}

Next, we show that conditioning on $X_i$ indeed cannot affect the conditional expectation of $f$ too much.
To prove this, we need one more tool, which is Strassen's monotone coupling theorem (easily proved from the max-flow min-cut theorem).

\begin{theorem}[Strassen's Theorem]
Consider two probability measures $\mu^{(0)}$ and $\mu^{(1)}$ on $\{0,1\}^L$.
Suppose that for every down-closed $M \subset \{0,1\}^L$, $\mu^{(1)}(M) \geq \mu^{(0)}(M)$.
Then there exists a coupling $\nu:\{0,1\}^L \times \{0,1\}^L \rightarrow [0,1]$ such that:
$$\mu^{(1)}(x) = \sum_y \nu(x, y),$$
$$\mu^{(0)}(y) = \sum_x \nu(x, y),$$
and $\nu(x, y) = 0$ unless $x \leq y$ coordinate-wise.
\end{theorem}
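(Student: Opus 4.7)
The plan is to deduce the coupling from the max-flow min-cut theorem applied to a transportation network. I would build a network with a source $s$, a sink $t$, and two copies of $\{0,1\}^L$ (call them $V_1$ and $V_0$); add an edge $s\to x$ of capacity $\mu^{(1)}(x)$ for each $x\in V_1$, an edge $y\to t$ of capacity $\mu^{(0)}(y)$ for each $y\in V_0$, and an edge $x\to y$ of infinite capacity for every pair $x\le y$. An $s$-$t$ flow $\nu$ of total value $1$ automatically satisfies $\sum_y\nu(x,y)=\mu^{(1)}(x)$ and $\sum_x\nu(x,y)=\mu^{(0)}(y)$ by conservation at the middle vertices, and is supported on pairs $x\le y$ because no other internal edges exist. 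So it suffices to show that the max flow equals $1$.

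The trivial cut that removes every edge out of $s$ has capacity $1$, so only the lower bound requires work. A finite-capacity cut is specified by sets $A\subseteq V_1$ and $B\subseteq V_0$ indexing the cut edges $s\to x$ and $y\to t$; to destroy all infinite internal edges we need $B\supseteq\{y:x\le y\text{ for some }x\in V_1\setminus A\}$. Writing $A'=V_1\setminus A$ and $U$ for its upward closure in $\{0,1\}^L$, the minimum such cut has capacity
$$\mu^{(1)}(V_1\setminus A')+\mu^{(0)}(U)=1-\mu^{(1)}(A')+\mu^{(0)}(U).$$

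Now I would apply the hypothesis to the down-closed complement $M=\{0,1\}^L\setminus U$. Since $A'\subseteq U$, monotonicity gives $\mu^{(1)}(A')\le\mu^{(1)}(U)=1-\mu^{(1)}(M)$, and by assumption $\mu^{(1)}(M)\ge\mu^{(0)}(M)=1-\mu^{(0)}(U)$. Chaining these,
$$1-\mu^{(1)}(A')+\mu^{(0)}(U)\ge\mu^{(1)}(M)+1-\mu^{(0)}(M)\ge 1,$$
so every cut has capacity at least $1$, hence max flow $=1$, and the values of an optimal flow on the internal edges yield the required $\nu$.

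The whole argument is routine once the network is set up; the only step demanding any real care is correctly matching the "down-closed $M$'' hypothesis to the shape of a minimum cut (which is naturally described by an up-closed set), so I do not anticipate a serious obstacle beyond that bookkeeping.
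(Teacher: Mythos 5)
Your proof is correct and is exactly the max-flow min-cut argument that the paper itself gestures at (the paper only remarks that Strassen's theorem is ``easily proved from the max-flow min-cut theorem'' without supplying details). The network construction, the identification of finite cuts with an up-closed set $U$ and the complementary down-closed $M$, and the chain of inequalities showing every cut has capacity at least $1$ are all right.
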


I.e., if $\mu^{(1)}$ dominates $\mu^{(0)}$ on every down-closed event, then it is possible to transform $\mu^{(0)}$ into $\mu^{(1)}$ in such a way that we only transfer probability mass {\em downwards} in $\{0,1\}^L$.
From the condition of negative regression and Strassen's theorem, we get immediately the following.

\begin{corollary}
\label{cor:coupling}
For random variables $X_1,\ldots,X_n$ satisfying negative regression, $K \subset [n]$, $a_K \in \{0,1\}^K$, and $i \in [n] \setminus K$ such that
$\Pr[X_K=a_k, X_i=0] > 0$, $\Pr[X_K=a_k, X_i=1] > 0$,  there exists a coupling $\nu:\{0,1\}^L \times \{0,1\}^L \rightarrow [0,1]$
for $L = [n] \setminus (K \cup \{i\})$ such that
$$ \mu^{(1)}(a_L) = \Pr[X_L=a_L \mid X_K=a_K, X_i=1] = \sum_y \nu(a_L,y),$$
$$ \mu^{(0)}(b_L) = \Pr[X_L=b_L \mid X_K=a_K, X_i=0] = \sum_x \nu(x,b_L),$$
and $\nu(x,y) = 0$ unless $x \leq y$.
\end{corollary}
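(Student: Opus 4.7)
The plan is to derive Corollary~\ref{cor:coupling} as a direct application of Strassen's theorem. Let $\mu^{(0)}$ and $\mu^{(1)}$ denote the conditional distributions of $X_L$ given $(X_K = a_K, X_i = 0)$ and $(X_K = a_K, X_i = 1)$, respectively; both are well-defined probability measures on $\{0,1\}^L$ by the positivity assumption on the two conditioning events. Strassen's theorem, as stated just above, gives a coupling $\nu$ supported on pairs $(x,y)$ with $x \leq y$ and with marginals $\mu^{(1)}, \mu^{(0)}$ as soon as $\mu^{(1)}(M) \geq \mu^{(0)}(M)$ for every down-closed $M \subseteq \{0,1\}^L$. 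So the task reduces to verifying this stochastic dominance.

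To verify it, I would invoke negative regression with $I = L$, $J = K \cup \{i\}$, conditioning values $a = (a_K, 0) \leq b = (a_K, 1)$ in $\{0,1\}^J$, and test function $g = \mathbf{1}_{M^c}$. Since $M$ is down-closed, its complement $M^c$ is up-closed, so $g$ is non-decreasing. Negative regression then yields
$$\E[g(X_L) \mid X_J = a] \geq \E[g(X_L) \mid X_J = b],$$
which, after subtracting from $1$, is exactly
$$\Pr[X_L \in M \mid X_K = a_K, X_i = 1] \geq \Pr[X_L \in M \mid X_K = a_K, X_i = 0],$$
i.e., $\mu^{(1)}(M) \geq \mu^{(0)}(M)$, as needed.

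There is no real obstacle here; the only care needed is to keep the direction of inequalities straight — negative regression makes non-decreasing functions smaller when one conditions on a larger value, which translates into putting more mass on down-closed sets under the larger conditioning $X_i=1$. Once the stochastic dominance is in hand, Strassen's theorem supplies the coupling $\nu$, and reading off its marginals gives the two identities displayed in the corollary.
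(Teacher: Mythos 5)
Your proof is correct and is exactly the derivation the paper intends (the paper states the corollary as following ``immediately'' from negative regression and Strassen's theorem without spelling out the details). Taking $I=L$, $J=K\cup\{i\}$, $a=(a_K,0)\leq b=(a_K,1)$, and the non-decreasing indicator of the up-closed complement of a down-closed set correctly verifies the dominance hypothesis $\mu^{(1)}(M)\geq\mu^{(0)}(M)$, with the inequality directions handled properly.
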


Now we are ready to construct a martingale with bounded differences. We formalize this as follows.

\begin{lemma}
\label{lem:martingale}
Given $\{0,1\}$ random variables $X_1,\ldots,X_n$ satisfying negative regression, and a $1$-Lipschitz function $f:\{0,1\}^n \rightarrow \RR$,
there exists an adaptive ordering (random permutation) $\pi(1), \pi(2), \ldots, \pi(n)$ such that:
\begin{compactitem}
\item $\pi(1)$ is deterministic.
\item For each $1 \leq k<n$, $\pi(k+1)$ is determined by $\pi([k]) = \{\pi(1),\ldots,\pi(k)\} = K$ and $X_K = a_K$.
\item If we denote $Y_k = \E[f(X) \mid \pi([k]),X_{\pi([k])}]$, then $Y_0,Y_1,\ldots,Y_n$ is a martingale.
\item Conditioned on $\pi([k])=K, X_K=a_K$, there are $\alpha<\beta, \beta-\alpha \leq 2$, such that
$Y_{k+1} - Y_k \in [\alpha,\beta]$ with prob.~$1$.
\item For monotone $f$, $Y_{k+1} - Y_k \in [\alpha,\beta]$ where $\beta-\alpha \leq 1$.
\end{compactitem}
\end{lemma}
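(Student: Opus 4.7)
The construction of the adaptive ordering is an immediate corollary of Lemma~\ref{lem:pick}: having fixed $\pi([k])=K$ and observed $X_K=a_K$, I apply Lemma~\ref{lem:pick} to select $\pi(k+1) \in [n] \setminus K$ satisfying either the deterministic clause or the inequality
$$\sum_{\ell \in L \setminus \{\pi(k+1)\}} \Big(\E[X_\ell \mid X_K=a_K, X_{\pi(k+1)}=0] - \E[X_\ell \mid X_K=a_K, X_{\pi(k+1)}=1]\Big) \leq 1,$$
where $L=[n]\setminus K$. (Ties among eligible indices can be broken by a fixed rule.) The starting index $\pi(1)$ is the one produced by Lemma~\ref{lem:pick} with $K=\emptyset$; it is deterministic. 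The martingale property of $Y_k = \E[f(X)\mid \pi([k]),X_{\pi([k])}]$ is then a standard application of the tower property, using that $\pi(k+1)$ is $\sigma(\pi([k]),X_{\pi([k])})$-measurable, so augmenting the conditioning by $\pi(k+1)$ does not change the conditional expectation.

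The main work is bounding the martingale increment. Condition on $\pi([k])=K$, $X_K=a_K$, and write $i=\pi(k+1)$, $\pi_i=\Pr[X_i=1\mid X_K=a_K]$, and $g(b)=\E[f(X)\mid X_K=a_K,X_i=b]$ for $b\in\{0,1\}$. If the deterministic clause of Lemma~\ref{lem:pick} holds, $Y_{k+1}-Y_k=0$. Otherwise $Y_{k+1}-Y_k$ takes the two values $(1-\pi_i)(g(1)-g(0))$ and $-\pi_i(g(1)-g(0))$, so its range has length exactly $|g(1)-g(0)|$; everything reduces to bounding this quantity. By Corollary~\ref{cor:coupling}, there is a coupling $\nu$ on $\{0,1\}^L\times\{0,1\}^L$ (with $L=[n]\setminus(K\cup\{i\})$) between the conditional laws $\mu^{(1)},\mu^{(0)}$ of $X_L$ given $X_i=1,0$, supported on pairs $x\leq y$.

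The key estimate writes
$$g(1)-g(0)=\sum_{x\leq y}\nu(x,y)\bigl(f(a_K,1,x)-f(a_K,0,y)\bigr),$$
and bounds each summand by the $1$-Lipschitz property. For the general case, $|f(a_K,1,x)-f(a_K,0,y)|\leq 1+\|y-x\|_1$, and taking expectations gives
$$|g(1)-g(0)|\leq 1+\sum_{\ell\in L}\bigl(\E[X_\ell\mid X_K=a_K,X_i=0]-\E[X_\ell\mid X_K=a_K,X_i=1]\bigr)\leq 2$$
by Lemma~\ref{lem:pick}, so I can take $[\alpha,\beta]$ of length at most $2$. For monotone $f$ I refine both directions using $x\leq y$: on the one hand $f(a_K,1,x)-f(a_K,0,y)\leq f(a_K,1,y)-f(a_K,0,y)\leq 1$ by monotonicity and Lipschitz in coordinate $i$; on the other $f(a_K,1,x)-f(a_K,0,y)\geq f(a_K,0,x)-f(a_K,0,y)\geq -\|y-x\|_1$ by monotonicity and Lipschitz on $L$. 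Averaging with $\nu$ and invoking Lemma~\ref{lem:pick} a second time yields $-1\leq g(1)-g(0)\leq 1$, so the increment range has length at most $1$.

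\textbf{Main obstacle.} The whole argument hinges on combining two seemingly unrelated facts about the conditional law: the total negative influence bound from Lemma~\ref{lem:pick}, which is a second-moment (variance) statement, and Strassen's monotone coupling, which is a first-moment/ordering statement. The reason this works is that both quantities involve the same conditional marginal differences $\E[X_\ell\mid X_i=0]-\E[X_\ell\mid X_i=1]$, which the Lipschitz estimate through $\|y-x\|_1$ is designed to exploit. The subtle point, and the place where a careless proof could lose a factor, is applying the coupling asymmetrically between $x$ and $y$ in $f(a_K,1,x)-f(a_K,0,y)$, rather than bounding $g(1)$ and $g(0)$ separately; the latter would not give the right constant in the monotone case.
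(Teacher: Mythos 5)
Your proposal is correct and follows essentially the same route as the paper's proof: adaptive selection of $\pi(k+1)$ via Lemma~\ref{lem:pick}, the Strassen monotone coupling from Corollary~\ref{cor:coupling}, the asymmetric comparison $f(a_K,1,x)-f(a_K,0,y)$ across coupled pairs $x\leq y$, and the same two-sided refinement in the monotone case. The observation that the increment range has length exactly $|g(1)-g(0)|$ is a slightly cleaner packaging of what the paper states via $|Y^{(1)}_{k+1}-Y^{(0)}_{k+1}|$, but the substance is identical.
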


\begin{proof}
Let us fix $\pi([k]) = K$ and $X_K = a_K$.
Given this conditioning, there exists $i \in [n] \setminus K$ as provided by Lemma~\ref{lem:pick}.
Let us define $\pi(k+1)$ to be the minimum index $i$ satisfying the conclusion of Lemma~\ref{lem:pick}.

Now let us consider $Y_k = \E[f(X) \mid \pi([k])=K,X_K=a_K]$. 
Under this conditioning, $\pi(k+1)$ is deterministic and $X_{\pi(k+1)}$ could take two possible values ($0$ or $1$) which determines
the value of $Y_{k+1}$.  By construction, $Y_k = \E_{X_{\pi(k+1)}}[Y_{k+1} \mid \pi([k])=K, X_K=a_K]$.
By averaging over all possible choices of $K$ and $a_K$ consistent with the same value of $Y_k$, we also get $Y_k = \E[Y_{k+1} \mid Y_k]$; i.e.,
the sequence $Y_0,Y_1,\ldots,Y_n$ forms a martingale.

Our goal now is to analyze how much $Y_{k+1}$ can deviate from  $Y_k$. In the following we fix $\pi([k])=K$ and $X_K = a_K$.
We have $Y_k = \E[f(X) \mid \pi([k])=K, X_K=a_K]$. Recall that this conditioning also determines the choice of $\pi(k+1)$ (but of course not the value of $X_{\pi(k+1)}$).
If $X_{\pi(k+1)}$ attains only one possible value under this conditioning, then $Y_{k+1} = Y_k$ and we are done --- hence, let us assume
that both values of $X_{\pi(k+1)}$ occur with positive probability.
For $c=0,1$, let us denote 
$$Y_{k+1}^{(c)} = \E[f(X) \mid \pi(1),\ldots,\pi(k), \pi(k+1), X_{\pi(1)}, \ldots,X_{\pi(k)}; X_{\pi(k+1)}=c].$$
We claim that $|Y_{k+1}^{(1)} - Y_{k+1}^{(0)}| \leq 2$, and in the case of monotone $f$,  $|Y_{k+1}^{(1)} - Y_{k+1}^{(0)}| \leq 1$; showing this will complete the proof.

Let $L = [n] \setminus \pi([k+1]) = [n] \setminus (K \cup \{ \pi(k+1) \})$.
Denote by $\mu^{(c)}$ the probability distribution of $X_L \in \{0,1\}^L$ conditioned on $\pi([k])=K$, $X_K=a_K$, and $X_{\pi(k+1)} = c \in \{0,1\}$.
By Corollary~\ref{cor:coupling}, there is a monotone coupling $\nu(x,y)$ between $\mu^{(1)}$ and $\mu^{(0)}$. We can write
\begin{eqnarray*} 
Y^{(1)}_{k+1} & = & \E[f(X) \mid \pi([k])=K, X_K=a_K, X_{\pi(k+1)}=1] \\
& = & \sum_{a_L \in \{0,1\}^L} \mu^{(1)}(a_L) \, f(X_K=a_K, X_{\pi(k+1)}=1, X_L=a_L) \\
& = & \sum_{a_L,b_L \in \{0,1\}^L} \nu(a_L,b_L) \, f(X_K=a_K, X_{\pi(k+1)}=1, X_L=a_L).
\end{eqnarray*}
Similarly,
\begin{eqnarray*} 
Y^{(0)}_{k+1} & = & \E[f(X) \mid \pi([k])=K, X_K=a_K, X_{\pi(k+1)}=0] \\
& = & \sum_{b_L \in \{0,1\}^L} \mu^{(0)}(b_L) \, f(X_K=a_K, X_{\pi(k+1)}=0, X_L=b_L) \\
& = & \sum_{a_L, b_L \in \{0,1\}^L} \nu(a_L,b_L) \, f(X_K=a_K, X_{\pi(k+1)}=0, X_L=b_L).
\end{eqnarray*}
Now we can compare the values of $f$ in the two expressions. First we modify the coordinate $X_{\pi(k+1)}$, and then the remaining coordinates $X_L$.
Since $f$ is $1$-Lipschitz, we have
$$\abs{f(X_K=a_K, X_{\pi(k+1)}=1, X_L=a_L) - f(X_K=a_K, X_{\pi(k+1)}=0, X_L=a_L)} \leq 1.$$
Next, since  $a_L \leq b_L$ in all the terms with $\nu(a_L,b_L) > 0$, we have
$$\abs{f(X_K=a_k, X_{\pi(k+1)}=0, X_K=a_L) - f(X_K=a_k, X_{\pi(k+1)}=0, X_K=b_L)} \leq \| b_L \|_1 - \| a_L \|_1.$$
By the triangle inequality,
$$\abs{f(X_K=a_k, X_{\pi(k+1)}=1, X_K=a_L) - f(X_K=a_k, X_{\pi(k+1)}=0, X_K=b_L)} \leq 1 + \| b_L \|_1 - \| a_L \|_1. $$
Hence
\begin{eqnarray*}
 \abs{Y^{(1)}_{k+1} - Y^{(0)}_{k+1}} & \leq & \sum_{a_L,b_L \in \{0,1\}^L} \nu(a_L,b_L) (1 + \|b_L\|_1 - \|a_L\|_1) \\
  & = & 1 + \sum_{b_L \in \{0,1\}^L} \mu^{(0)}(b_L) \, \| b_L \|_1 - \sum_{a_L \in \{0,1\}^L} \mu^{(1)}(a_L)\,  \| a_L \|_1 \\
  & = & 1 + \sum_{\ell \in L} \E[X_\ell \mid X_K=a_K, X_{\pi(k+1)}=0] - \sum_{\ell \in L} \E[X_\ell \mid X_K=a_K, X_{\pi(k+1)}=1]
\end{eqnarray*}
by the properties of $\mu^{(0)}, \mu^{(1)}$ and $\nu$. 
Finally, we recall that $\pi(k+1)$ was chosen so as to satisfy the conclusion of  Lemma~\ref{lem:pick}: 
$$\sum_{\ell \in L} \E[X_\ell \mid X_K=a_K, X_{\pi(k+1)}=0] - \sum_{\ell \in L} \E[X_\ell \mid X_K=a_K, X_{\pi(k+1)}=1] \leq 1.$$
This concludes the proof that $|Y^{(1)}_{k+1} - Y^{(0)}_{k+1}| \leq 2$.

In the case of monotone $f$, we observe that we can improve some of the inequalities: we get
$$0 \leq f(X_K=a_K, X_{\pi(k+1)}=1, X_L=a_L) - f(X_K=a_K, X_{\pi(k+1)}=0, X_L=a_L) \leq 1 $$
and 
$$ 0 \geq f(X_K=a_k, X_{\pi(k+1)}=0, X_K=a_L) - f(X_K=a_k, X_{\pi(k+1)}=0, X_K=b_L)| \geq \| a_L \|_1 - \| b_L \|_1 $$
whenever $a_L \leq b_L$. This implies that 
$$ 1 \geq f(X_K=a_k, X_{\pi(k+1)}=1 X_K=a_L) - f(X_K=a_k, X_{\pi(k+1)}=0, X_K=b_L) \geq \| a_L \|_1 - \| b_L \|_1 $$
and by the same computations as above, we conclude that $1 \geq Y^{(1)}_{k+1} - Y^{(0)}_{k+1} \geq -1$.
\end{proof}

Theorem~\ref{thm:Lipschitz-R} now follows by standard exponential moment analysis; see for example \cite{McDiarmid89}.
For completeness, we summarize the rest of the analysis as follows.

\begin{proof}[Proof of Theorem~\ref{thm:Lipschitz-R}]
For a parameter $\lambda \in \RR$, we estimate the exponential moment $\E[e^{\lambda f(X_1,\ldots,X_n)}] = \E[e^{\lambda Y_n}]$.
Inductively, we prove that $\E[e^{\lambda (Y_k - Y_0)}] \leq e^{k \lambda^2 / 2}$ (for monotone $f$, the bound improves to $e^{k \lambda^2 / 8}$).
The inductive step is that
$$ \E[e^{\lambda (Y_{k+1} - Y_k)} \mid \pi([k])=K, X_K=a_K] \leq e^{\lambda^2 / 2} $$
which is true because under this conditioning, $Y_{k+1} - Y_k$ is a random variable of expectation $0$, confined to an interval of length $2$ (see Lemma~\ref{lem:martingale}).
It is known that the exponential moment of such a variable is upper-bounded by $e^{\lambda^2 / 2}$. 
By averaging over all choices of $K$ and $a_K$ that yield the same value of $Y_k$, we also obtain $ \E[e^{\lambda (Y_{k+1} - Y_k)} \mid Y_k] \leq e^{\lambda^2 / 2}.$
In the case of $f$ monotone, $Y_{k+1} - Y_k$ is confined to an interval of length $1$ and the exponential moment is upper-bounded by $e^{\lambda^2 / 8}$.

From here, by induction, we obtain
$$ \E[e^{\lambda (Y_{k+1} - Y_0)}] = \E_{Y_k}[ \E_{Y_{k+1}}[e^{\lambda (Y_{k+1} - Y_k)} \mid Y_k] \ e^{\lambda (Y_k-Y_0)}] \leq e^{\lambda^2 / 2} \ \E_{Y_k}[e^{\lambda(Y_k-Y_0)}] \leq e^{(k+1) \lambda^2 / 2}.$$
Therefore, $\E[e^{\lambda (Y_n - Y_0)}] \leq e^{n \lambda^2 / 2}$. (For $f$ monotone, the bound improves to $e^{n \lambda^2 / 8}$.)

Recall that $Y_0 = \E[f(X_1,\ldots,X_n)] = \mu$.
By Markov's inequality applied to the exponential moment, we get
$$ \Pr[f(X_1,\ldots,X_n) > \mu+t] = \Pr[e^{\lambda (Y_n - Y_0)} \geq e^{\lambda t}] \leq \frac{\E[e^{\lambda (Y_n-Y_0)}]}{e^{\lambda t}} = e^{n \lambda^2 / 2 - \lambda t}.$$
The choice of $\lambda = t/n$ gives the upper-tail bound in Theorem~\ref{thm:Lipschitz-R}; the other bounds follow similarly.
\end{proof}

\section{Discussion and conclusion}

Let us discuss briefly the notion of negative regression and how it relates to other notions of negative dependence.
It is fair to say that most known examples that satisfy negative association or negative regression actually satisfy the strong Rayleigh property as well.
For example, random variables arising in the context of random spanning trees, determinantal point processes and exclusion dynamics processes are in this category.
However, there are ensembles of random variables satisfying negative regression and not stronger notions of negative dependence. We want to mention a few examples here.

\paragraph{Random variables conditioned on their sum.}
It is known that if $X_1,\ldots,X_n$ are independent, then the probability measure of $(X_1,\ldots,X_n)$ conditioned on $\sum_{i=1}^{n} X_i = k$ is strongly Rayleigh, for any fixed $k$. The probability measure conditioned on $\sum_{i=1}^{n} X_i \in \{ k, k+1 \}$ is still strongly Rayleigh. Conditioning on $\sum_{i=1}^{n} X_i \in [a,b]$ does not preserve the strong Rayleigh condition in general \cite{BBL09}. However, such ensembles still satisfy negative regression \cite{Pemantle00}. More generally, ensembles produced from independent random variables by taking products, ``external fields" and ``rank rescaling" satisfy negative regression; we refer the reader to \cite{Pemantle00} for a precise statement and proof.

\paragraph{Variables of large influence.}
As we remarked, \cite{PemPeres14} also handles the case of random variables satisfying the stochastic covering property and homogeneity, i.e. the condition $\sum_{i=1}^{n} X_i = k$. The case of conditioning on $\sum_{i=1}^{n} X_i \in [a,b]$ is not covered by their theorem, although we believe that their method would still apply. However, what seems significantly beyond the scope of stochastic covering is the case of random variables where one variable can have a large effect on the remaining variables. (Under stochastic covering, conditioning on one variable can change the expected sum of the remaining variables by at most $1$.)
An example of such a measure is our counterexample in Appendix~\ref{app:error}; this counterexample illustrates what the issue was in the previously claimed proof, and also this is the main hurdle that our proof had to overcome.

\

The main question that this paper leaves open is whether similar concentration bounds still hold for Lipschitz functions of negatively associated variables.


\appendix

\section{The failure of a fixed ordering}
\label{app:error}

Here we review briefly the argument presented in \cite{DubhashiRanjan98} and why it is flawed. In Proposition 31, \cite{DubhashiRanjan98} states Theorem~\ref{thm:Lipschitz-R} under the assumption of $f$ being monotone and Lipschitz with constant $c_i$ in variable $X_i$; here let us assume $c_i=1$.
The proof proceeds by defining $Y_k = \E[f(X) \mid X_1,\ldots,X_k]$ (using our notation) and claiming that this martingale has bounded differences. In the last line of the proof, it is claimed that ``Similarly it follows that \ldots". However, the desired inequality does not follow by the same argument, and can be false.

\medskip
\noindent{\bf Example.}
Consider the following random variables $X_1,\ldots,X_n$: $X_2,\ldots,X_n$ are independent and uniformly random in $\{0,1\}$. $X_1$ is the NAND function of $X_2,\ldots,X_n$, i.e.,~$X_1 = 1 - \prod_{i=2}^{n} X_i$. Note that $\Pr[X_1 = 1] = 1 - 1/2^{n-1}$.
We claim that $X_1,\ldots,X_n$ satisfy negative regression: 

Let $I,J \subset [n]$ be disjoint and $a \leq b \in \{0,1\}^J$. We consider the following cases:
\begin{compactitem}
\item If $1 \notin I \cup J$, then there is no dependence between $X_I$ and $X_J$.
\item If $1 \in I$, then the distribution of $X_I$ differs under $X_J=a,b$ only if $a \neq 1^J$ and $b = 1^J$. In this case, the $X_J = b$ is consistent with $X_i=0$ or $X_i=1$, while $X_J = a$ implies $X_1 = 1$. The other variables in $I$ are independent of $X_J$. Hence, conditioning on $X_J = a$ as opposed to $X_J=b$ can only increase the expectation of any monotone function $f(X_I)$. 
\item If $1 \in J$, then the distribution of $X_I$ differs under $X_J=a,b$ only if $a_1=0$ and $b_1=1$. In this case, $X_I = a$ implies that all the other variables are equal to $1$, while $X_I = b$ is consistent with any assignment to the other variables except all 1's. Therefore, conditioning on $X_J=a$ as opposed to $X_J=b$ can only increase the expectation of any monotone function $f(X_I)$. 
\end{compactitem}

\medskip

Now consider the martingale $Y_0, Y_1, \ldots, Y_n$, where $Y_k = \E[f(X) \mid X_1,\ldots,X_k]$ for $f(X) = \sum_{i=1}^{n} X_i$. 
$$Y_0 = \E[f(X)] = \sum_{i=1}^{n} \E[X_i] = (1 - \frac{1}{2^{n-1}}) + \frac12 (n-1) = \frac12 (n+1) - \frac{1}{2^{n-1}}.$$
If $X_1 = 0$ (which happens with probability $1/2^{n-1}$), this implies that all the remaining variables are equal to $1$, which means that $Y_1 = n-1$. 
Therefore, the difference between $Y_0$ and $Y_1$ can be $\Omega(n)$. Clearly, the issue here is the enormous influence of $X_1$ via its correlation with the remaining variables, and this is what motivates our adaptive ordering of variables.

\end{document}